\newtheorem{theorem}{Theorem}[section]
\newtheorem{lemma}[theorem]{Lemma}
\theoremstyle{definition}
\theoremstyle{remark}
\numberwithin{equation}{section}
\newcommand{\leg}[2]{\genfrac{(}{)}{}{}{#1}{#2}}
\begin{document}

\title{Combinatorial Applications of M\"obius Inversion}

\author{Marie Jameson}
\address{Department of Mathematics and Computer Science, Emory University,
    Atlanta, Georgia 30322}
\email{mjames7@emory.edu}

\author{Robert P. Schneider}
\address{Department of Mathematics and Computer Science, Emory University,
    Atlanta, Georgia 30322}
\email{robert.schneider@emory.edu}

\subjclass[2010]{Primary 11A25, 11P84, 05A17}

\commby{Matthew Papanikolas}

\begin{abstract}
In important work on the parity of the partition function, Ono \cite{Ono} related values of the partition function to coefficients of a certain mock theta function modulo 2. In this paper, we use M\"obius inversion to give analogous results which relate several combinatorial functions via identities rather than congruences.
\end{abstract}

\maketitle

\section{Introduction and Statement of Results}

Infinite products are ubiquitous in number theory and the theory of $q$-series. For example, recall Euler's identity
\[\prod_{n=1}^\infty(1-q^n) = \sum_{k=-\infty}^\infty (-1)^kq^{k(3k-1)/2},\] and Jacobi's identity \[\prod_{n=1}^\infty(1-q^n)^3 = \sum_{n=0}^\infty (-1)^n(2n+1)q^{n(n+1)/2}.\]

More recently, Borcherds defined ``infinite product modular forms'' \[F(z) = q^h\prod_{n=1}^\infty(1-q^n)^{a(n)},\] where $q:=e^{2\pi iz}$ and the $a(n)$'s are coefficients of certain weight 1/2 modular forms (see Chapter 4 of \cite{Ono}). This was generalized by Bruinier and Ono in \cite{BruinierOno}.

At first glance, this does not look like the stuff of combinatorics. However, one might consider the partition function $p(n)$ and ask whether the product
\begin{equation} \label{thequestion}
\prod_{n=1}^\infty (1-q^n)^{p(n)}
\end{equation}
has any special properties.  In this direction, recent work of Ono \cite{Ono}  studies the parity of $p(n).$ For $1<D\equiv 23 \pmod{24},$ Ono defined
\[\Psi_D(q):= \prod_{m=1}^\infty\prod_{0\leq b\leq D-1}\left(1-\zeta_D^{-b}q^m\right)^{\leg{-D}{b}C(\overline{m};Dm^2)},\]
where $\overline{m}$ is the reduction of $m \pmod{12},$ $\zeta_D:=e^{2\pi i/D},$ and $C(\overline{m};Dm^2)$ is the coefficient of a mock theta function. It turns out that
\[C(\overline{m};n) \equiv \begin{cases} p\left(\frac{n+1}{24}\right) \pmod{2} & \text{if } \overline{m}\equiv 1,5,7,11 \pmod{12}\\0 & \text{otherwise} \end{cases}.\]
Ono considers the logarithmic derivative
\begin{equation} \label{logderiv}
\sum_{n=1}^\infty B_D(n)q^n := \frac{1}{\sqrt{-D}}\cdot \frac{q\frac{d}{dq}\Psi_D(q)}{\Psi_D(q)} = \sum_{m=1}^\infty mC(\overline{m};Dm^2)\sum_{n=1}^\infty\leg{-D}{n}q^{mn}
\end{equation}
and notes that reducing mod 2 gives
\begin{equation} \label{reductionmod2}
\frac{1}{\sqrt{-D}}\cdot \frac{q\frac{d}{dq}\Psi_D(q)}{\Psi_D(q)} \equiv \sum_{\begin{subarray}{c} m\geq1\\ \gcd(m,6)=1 \end{subarray}}p\left(\frac{Dm^2+1}{24}\right)\sum_{\begin{subarray}{c} n\geq1\\ \gcd(n,D)=1 \end{subarray}}q^{mn}\pmod{2}.
\end{equation}

This observation was instrumental in proving strong results regarding the parity of the partition function \cite{Ono}. However, in this work we desire to establish identities rather than congruences, so it seems pertinent to again consider products of the form \eqref{thequestion}, but now at the level of $q$-series identities.

From this perspective, we wish to explore the logarithmic derivative of
\begin{equation} \label{infiniteproduct}
\prod_{n=1}^\infty(1-q^n)^{a(n)}
\end{equation}
 for other, more general combinatorial functions $a(n).$ Then for a nonnegative integer $n$, define
\begin{align*}
Q(n) &:= \# \text{of partitions of $n$ into distinct parts}\\
\widehat{Q}(n) &:= \# \text{ of partitions of $n$ whose parts occur with the same multiplicity}
\end{align*} 
and
\begin{align*}
F_Q(q) &:=\sum_{n=1}^\infty Q(n)q^n\\
F_{\widehat{Q}}(q) &:= \sum_{n=1}^\infty \widehat{Q}(n)q^n\\
\Psi(Q;q) &:=\prod_{n=1}^\infty (1-q^n)^{Q(n)/n}.
\end{align*} 

\begin{theorem} \label{thm1}
We have that \[\frac{q\frac{d}{dq}\Psi(Q;q)}{\Psi(Q;q)} = -F_{\widehat{Q}}(q).\] Moreover, for all $n\geq 1$ we have \[Q(n) = \sum_{d|n}\mu(d)\widehat{Q}(n/d),\] where $\mu$ denotes the M\"obius function.
\end{theorem}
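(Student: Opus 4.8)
The plan is to derive the Möbius inversion statement as an immediate corollary of the logarithmic derivative identity, so the real content lies in the latter. First I would take the formal logarithm $\log\Psi(Q;q) = \sum_{n\geq 1}\frac{Q(n)}{n}\log(1-q^n)$ and apply $q\frac{d}{dq}$ term by term, obtaining
\[
\frac{q\frac{d}{dq}\Psi(Q;q)}{\Psi(Q;q)} = \sum_{n\geq 1}\frac{Q(n)}{n}\cdot\frac{-nq^n}{1-q^n} = -\sum_{n\geq 1}Q(n)\sum_{k\geq 1}q^{nk}.
\]
Collecting the coefficient of $q^N$ by writing $N=nk$ recasts the right-hand side as $-\sum_{N\geq 1}\bigl(\sum_{d\mid N}Q(d)\bigr)q^N$. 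Comparing with $-F_{\widehat{Q}}(q)$, the first identity is therefore equivalent to the purely arithmetic statement $\widehat{Q}(N) = \sum_{d\mid N}Q(d)$ for every $N\geq 1$.

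The heart of the argument is a bijective proof of $\widehat{Q}(N) = \sum_{d\mid N}Q(d)$. A partition of $N$ in which all parts occur with the same multiplicity is determined by that common multiplicity $m\geq 1$ together with its set of distinct part-values $\lambda_1 > \cdots > \lambda_r$, each repeated $m$ times, so that $m(\lambda_1 + \cdots + \lambda_r) = N$. This forces $m\mid N$, and the values $\lambda_1,\dots,\lambda_r$ constitute a partition of $N/m$ into distinct parts. Conversely, each divisor $m$ of $N$ paired with a partition of $N/m$ into distinct parts yields exactly one such partition of $N$. Summing over the admissible multiplicities gives $\widehat{Q}(N) = \sum_{m\mid N}Q(N/m) = \sum_{d\mid N}Q(d)$, establishing the first assertion.

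With the relation $\widehat{Q}(n) = \sum_{d\mid n}Q(d)$ in hand, $\widehat{Q}$ is exactly the divisor sum of $Q$, and classical Möbius inversion immediately yields $Q(n) = \sum_{d\mid n}\mu(d)\widehat{Q}(n/d)$, which is the second statement. I expect the bijection to be the main obstacle: the delicate point is bookkeeping the common multiplicity correctly---verifying that it must divide $N$ and that, once it is fixed, the distinct part-values range over precisely the partitions of $N/m$ into distinct parts. By contrast, the logarithmic-derivative computation is a routine term-by-term manipulation and the inversion step is entirely standard.
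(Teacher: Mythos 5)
Your proposal is correct and follows essentially the same route as the paper: both reduce the logarithmic derivative to $-\sum_{N\geq 1}\bigl(\sum_{d\mid N}Q(d)\bigr)q^N$ (the paper via a general lemma for products $\prod(1-q^n)^{a(n)}$ with $a(n)=Q(n)/n$, you by the same computation done inline), identify this with $-F_{\widehat{Q}}(q)$ through the divisor-sum identity $\widehat{Q}(N)=\sum_{d\mid N}Q(d)$, and then apply classical M\"obius inversion. The only difference is that the paper asserts the divisor-sum identity as a straightforward observation, whereas you supply the explicit bijection (common multiplicity $m\mid N$ paired with a partition of $N/m$ into distinct parts), which is a correct and welcome filling-in of that detail.
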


For example, one can compute that 
\begin{align*}
\Psi(Q;q) &=1-q-\frac{1}{2}q^2-\frac{1}{6}q^3+\frac{1}{24}q^4+\frac{43}{120}q^5-\frac{233}{720}q^6+\cdots\\
\frac{q\frac{d}{dq}\Psi(Q;q)}{\Psi(Q;q)}  &= -q-2q^2-3q^3-4q^4-4q^5-8q^6-\cdots\\
F_{\widehat{Q}}(q) &= q+2q^2+3q^3+4q^4+4q^5+8q^6+\cdots = -\frac{q\frac{d}{dq}\Psi(Q;q)}{\Psi(Q;q)}.
\end{align*}

In fact, while it is not obvious from a combinatorial perspective, this theorem is simple; it follows from the straightforward observation that \[\widehat{Q}(n) = \sum_{d|n}Q(d).\] Now we present two results in a slightly different direction that are perhaps more surprising. Looking again to the work of Ono \cite{Ono}, we can apply M\"obius inversion to \eqref{logderiv} to find
\begin{equation}\label{mobiusinv}
C(\overline{n};Dn^2) = \frac{1}{n}\sum_{d|n} \mu(d)\leg{-D}{d}B_D(n/d).
\end{equation}
It is natural to ask whether there are analogs of this statement for related $q$-series, even if the series do not arise as logarithmic derivatives of Borcherds products.

We begin our search of interesting combinatorial functions by noting that the generating function for the partition function $p(n)$ obeys the identity of Euler
\[P(q):=\sum_{n=0}^\infty p(n)q^n = \sum_{n=0}^\infty \frac{q^{n^2}}{(q)_n^2}\]
where $(q)_n$ is the Pochhammer symbol, defined by $(q)_0=1$ and $(q)_n=\prod_{k=1}^n (1-q^k)$ for $n\geq1$. We wish to investigate other functions of a similar form, such as those presented in the following theorems, which are formally analogous to \eqref{mobiusinv} but involving other combinatorial functions. 

Let $p_a(n)$ denote the number of partitions of $n$ into $a$ parts, and define $\widehat{p}_a(n)$ to be the number of partitions of $n$ into $ak$ parts for some integer $k\geq 1$, i.e.  \[\widehat{p}_a(n) := \sum_{j=1}^\infty p_{aj}(n).\]  On analogy to the identities for $P(q)$ above, we let $P_a(q)$ and $\widehat{P}_a(q)$ denote the generating functions of $p_a(n)$ and $\widehat{p}_a(n),$ respectively. Then we have the following identities for $P_a(q)$ and $\widehat{P}_a(q)$.
\begin{theorem}\label{thm2} We have that
\begin{align*}
P_a(q) &=\sum_{n=1}^\infty{\mu (n)\widehat{P_{an}}(q)}\\
p_a(n) &= \sum_{j=1}^\infty {\mu (j)\widehat{p_{aj}}(n)}.
\end{align*} 
\end{theorem}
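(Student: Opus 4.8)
The plan is to reduce everything to a single combinatorial identity and then invert it, exactly in the spirit of Theorem~\ref{thm1}. First I would record the fundamental observation, which is nothing more than the definition of $\widehat{p}_a$ rewritten: for every $a\geq 1$ and $n\geq 1$,
\[\widehat{p}_a(n)=\sum_{j=1}^\infty p_{aj}(n)=\sum_{a\mid m}p_m(n).\]
The crucial structural point is that this expresses $\widehat{p}_a(n)$ as a sum of $p_m(n)$ over the \emph{multiples} $m$ of $a$, rather than over divisors; consequently the inversion required is the dual (``sum over multiples'') form of M\"obius inversion, and the factor that appears is $\mu(m/a)$ rather than $\mu(a)$. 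I would also note at the outset that for fixed $n$ all the sums in sight are finite: since a partition of $n$ has at most $n$ parts, $p_k(n)=0$ whenever $k>n$, so $\widehat{p}_{aj}(n)=0$ as soon as $aj>n$. This removes any convergence concern and legitimizes every rearrangement below.

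Next I would establish the coefficient identity $p_a(n)=\sum_{j\geq1}\mu(j)\,\widehat{p}_{aj}(n)$ by a direct interchange of summation. Substituting the definition of $\widehat{p}_{aj}$ gives
\[\sum_{j=1}^\infty \mu(j)\,\widehat{p}_{aj}(n)=\sum_{j=1}^\infty \mu(j)\sum_{i=1}^\infty p_{aji}(n)=\sum_{k=1}^\infty\Bigl(\sum_{j\mid k}\mu(j)\Bigr)p_{ak}(n),\]
where I have collected terms according to $k=ji$. By the defining property of the M\"obius function, $\sum_{j\mid k}\mu(j)$ equals $1$ when $k=1$ and $0$ otherwise, so the entire sum collapses to the single surviving term $p_{a}(n)$, as desired. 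Equivalently, one may simply invoke the dual M\"obius inversion formula applied to the relation $\widehat{p}_a(n)=\sum_{a\mid m}p_m(n)$.

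Finally I would obtain the generating-function identity by summing the coefficient identity against $q^n$: multiplying $p_a(n)=\sum_{j\geq1}\mu(j)\widehat{p}_{aj}(n)$ by $q^n$ and summing over $n\geq1$, the finiteness noted above permits swapping the order of summation, yielding
\[P_a(q)=\sum_{n=1}^\infty p_a(n)q^n=\sum_{j=1}^\infty \mu(j)\sum_{n=1}^\infty \widehat{p}_{aj}(n)q^n=\sum_{j=1}^\infty \mu(j)\,\widehat{P}_{aj}(q).\]
I do not anticipate a serious obstacle: as with Theorem~\ref{thm1}, once the generating-function dressing is stripped away the statement rests entirely on the trivial identity $\widehat{p}_a(n)=\sum_{a\mid m}p_m(n)$. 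The one point demanding genuine care is recognizing that this is a divisibility relation over multiples, so that the correct inversion produces $\mu(j)$ summed over $j\geq1$ (equivalently $\mu(m/a)$ summed over multiples $m$ of $a$); getting this form right, together with confirming that the sums terminate for each fixed $n$, is essentially the whole of the argument.
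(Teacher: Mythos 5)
Your proof is correct and follows essentially the same route as the paper: the paper packages the argument as a general lemma (its Lemma \ref{lemma2}) for any pair of functions satisfying $\widehat{f}(a;n)=\sum_{j\ge 1}f(aj;n)$, but the proof of that lemma is precisely your interchange of summation over $k=ji$ combined with $\sum_{d\mid k}\mu(d)=0$ for $k>1$. Your explicit remark that $p_k(n)=0$ for $k>n$ is a nice touch, since it verifies the absolute-convergence hypothesis that the paper's lemma assumes without further comment.
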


Observe that for $a=1,$ we have that $p_1(n)=1$ for all integers $n$, and also that
\[\widehat{p_1}(n)= \sum_{j=1}^\infty {p_j(n)}=p(n).\] 
In this case, the generating functions are given by
\[P_1(q) = \sum_{n=1}^\infty p_1(n)q^n = \sum_{n=1}^\infty q^n = \frac{q}{1-q}\] 
and
\[\widehat{P_1}(q)=\sum_{n=1}^\infty{\widehat{p_1}(n)q^n}=\sum_{n=1}^\infty{p(n)q^n}.\] 
Thus by Theorem \ref{thm2}, we have the explicit identities
\[P_1(q)=\sum_{n=1}^\infty \mu(n)\widehat{P_n}(q) =\frac{q}{1-q}\] 
and, perhaps more interestingly,
\[\ p_1(n)= \sum_{j=1}^\infty {\mu (j)\widehat{p_j}(n)}=1.\]

Looking again for identities similar to those given above for $P(q),$ for a positive integer $a$ set
\begin{align*}
B_a(q) &:= \sum^{\infty }_{n=1}{\frac{q^{n^2+an}}{(q)^2_n}}=:\sum^{\infty }_{N=1}{b_a(N)q^N}\\
\widehat{B}_a(q) &:= \sum^{\infty }_{n=1}{\frac{q^{n^2+an}}{(q)^2_n\left(1-q^{an}\right)}}=:\sum^{\infty }_{N=1}{\widehat{b}_a(N)q^N}.
\end{align*}

Generalizations of $q$-series such as $B_a(q)$ and $\widehat{B}_a(q)$ have been studied by Andrews \cite{Andrews}.  One can give a combinatorial interpretation for the coefficients $b_a(N)$ and $\widehat{b}_a(N)$ as follows.

Consider the Ferrers diagram of a given partition of an integer $N$ with an $n\times n$ Durfee square, and having a rectangle of base $n$ and height $m$ adjoined immediately below the $n\times n$ Durfee square. For example, the partition of $N=12$ shown below has a $2\times 2$ Durfee square (marked by a solid line), and either a $2\times 2$ or $2\times 1$ rectangle below it (the $2\times 1$ rectangle is marked by a dashed line).  
\vspace{0.5cm}
\begin{center}
\begin{tikzpicture}[inner sep=0pt,thick,
    dot/.style={fill=black,circle,minimum size=4pt}]
\node[dot] (a) at (0,0) {};
\node[dot] (a) at (1,1) {};
\node[dot] (a) at (0,1) {};
\node[dot] (a) at (0,2) {};
\node[dot] (a) at (1,2) {};
\node[dot] (a) at (0,3) {};
\node[dot] (a) at (1,3) {};
\node[dot] (a) at (0,4) {};
\node[dot] (a) at (1,4) {};
\node[dot] (a) at (2,4) {};
\node[dot] (a) at (3,4) {};
\node[dot] (a) at (2,3) {};
\draw[-] (-0.1,2.5)--(1.5,2.5);
\draw[-] (1.5,2.5)--(1.5,4.1);
\draw[dashed] (-0.1,1.5)--(1.5,1.5);
\draw[dashed] (1.5,1.5)--(1.5,2.5);
\end{tikzpicture}
\end{center}
We refer to this rectangular region of the diagram as an $n\times m$ ``Durfee rectangle,'' and note that a given Ferrers diagram may have nested Durfee rectangles of sizes $n\times 1, n\times 2, \ldots, n\times M$, where $M$ is the height of the largest such rectangle (assuming that at least one Durfee rectangle is present in the diagram).  

We then have that
\begin{align*}
b_a(N)=& \# \text{ of partitions of } N \text{ having an } n\times n \text{ Durfee square and at least an } n\times a \text{ Durfee}\\ & \text{rectangle}\\
\widehat{b_a}(N) =&\# \text{ of partitions of } N \text{ having an } n\times n \text{ Durfee square and at least an } n \times a \text{ Durfee}\\
& \text{rectangle (counted with multiplicity as an } n \times a \text{ rectangle may be nested within }\\
& \text{taller Durfee rectangles of size } n\times ak, \text{ for } k\geq1).
\end{align*}
Assuming these notations, we have the following result.

\begin{theorem} \label{thm3} We have that
\[\widehat{b_a}(n)= \sum_{j=1}^\infty {b_{aj}(n)}.\]
Moreover, we have
\begin{align*}
B_a(q)&=\sum_{n=1}^\infty{\mu (n)\widehat{B_{an}}(q)}\\
b_a(n)&= \sum_{j=1}^\infty {\mu (j)\widehat{b_{aj}}(n)}.
\end{align*}
\end{theorem}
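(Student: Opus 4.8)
The plan is to prove the first identity directly from the defining $q$-series, and then to obtain the two Möbius-inversion formulas from it exactly as in the proof of Theorem~\ref{thm2}.

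First I would expand the extra factor appearing in $\widehat{B}_a(q)$ as a geometric series. Writing $\frac{1}{1-q^{an}}=\sum_{k=0}^\infty q^{ank}$ and substituting into the definition gives
\[
\widehat{B}_a(q)=\sum_{n=1}^\infty\frac{q^{n^2+an}}{(q)_n^2}\sum_{k=0}^\infty q^{ank}=\sum_{n=1}^\infty\sum_{k=0}^\infty\frac{q^{n^2+an(k+1)}}{(q)_n^2}.
\]
Setting $j=k+1$ and interchanging the two sums, the inner summand becomes $q^{n^2+(aj)n}/(q)_n^2$, so for each fixed $j$ the sum over $n$ is precisely $B_{aj}(q)$. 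This yields the generating-function identity $\widehat{B}_a(q)=\sum_{j=1}^\infty B_{aj}(q)$, and comparing coefficients of $q^n$ gives $\widehat{b}_a(n)=\sum_{j=1}^\infty b_{aj}(n)$. This also matches the combinatorial reading: a diagram whose tallest Durfee rectangle has height $M$ is counted by $b_{aj}(n)$ exactly when $aj\le M$, and hence with multiplicity $\lfloor M/a\rfloor$ in $\widehat{b}_a(n)$. Since the lowest-degree term of $B_{aj}(q)$ is $q^{1+aj}$, one has $b_{aj}(n)=0$ whenever $aj\ge n$; hence every sum over $j$ that occurs is finite for each fixed coefficient, and all rearrangements are legitimate.

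With the relation $\widehat{B}_a=\sum_{j\ge 1}B_{aj}$ in hand, the remaining two identities follow by Möbius inversion \emph{over multiples}, just as for Theorem~\ref{thm2}. Explicitly, I would compute
\[
\sum_{n=1}^\infty\mu(n)\widehat{B}_{an}(q)=\sum_{n=1}^\infty\mu(n)\sum_{k=1}^\infty B_{ank}(q)=\sum_{m=1}^\infty B_{am}(q)\sum_{n\mid m}\mu(n),
\]
where the terms have been collected according to $m=nk$. Since $\sum_{n\mid m}\mu(n)$ equals $1$ for $m=1$ and $0$ otherwise, the right-hand side collapses to $B_a(q)$, which is the identity $B_a(q)=\sum_{n=1}^\infty\mu(n)\widehat{B}_{an}(q)$. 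Extracting the coefficient of $q^n$ then yields $b_a(n)=\sum_{j=1}^\infty\mu(j)\widehat{b}_{aj}(n)$.

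The only genuine computation is the geometric-series manipulation in the second step, which is routine, so I do not expect a serious obstacle. The one point that requires care is bookkeeping: recording that for each fixed $n$ only finitely many of the $b_{aj}(n)$ are nonzero, so that both the interchange of summation and the Möbius inversion over multiples are valid with no convergence concerns.
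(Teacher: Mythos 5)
Your proof is correct and takes essentially the same approach as the paper: the geometric-series expansion of $1/(1-q^{an})$ giving $\widehat{B}_a(q)=\sum_{j\ge 1}B_{aj}(q)$ is exactly the paper's first step, and your Möbius inversion over multiples is precisely the content of the paper's Lemma~\ref{lemma2}, which you have simply written out inline (adding the useful observation that each coefficient sum is finite, which the paper leaves implicit).
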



\section{Proof of Theorem \ref{thm1}}
First we prove a lemma regarding logarithmic derivatives.

\begin{lemma}\label{lem}
For any sequence $\{a(n)\},$ we have that
\[\frac{q\frac{d}{dq}\left(\prod_{n=1}^\infty(1-q^n)^{a(n)}\right)}{\prod_{n=1}^\infty(1-q^n)^{a(n)}} = - \sum_{n=1}^\infty \sum_{d|n}a(d)dq^n.\]
\end{lemma}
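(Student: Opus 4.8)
The plan is to take the formal logarithm of the product, differentiate term by term, and then expand the resulting rational functions as geometric series so that the coefficients can be regrouped according to divisors. Write $P(q) := \prod_{n=1}^\infty (1-q^n)^{a(n)}$. Since the logarithmic derivative of a product is the sum of the logarithmic derivatives of its factors, the first step is to record that
\[
\frac{q\frac{d}{dq}P(q)}{P(q)} = q\frac{d}{dq}\log P(q) = \sum_{n=1}^\infty a(n)\, q\frac{d}{dq}\log(1-q^n).
\]
This reduces the problem to a single factor, where $q\frac{d}{dq}\log(1-q^n) = -\dfrac{n q^n}{1-q^n}$, a direct computation.

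Next I would expand each factor as a geometric series, writing $\dfrac{q^n}{1-q^n} = \sum_{k=1}^\infty q^{nk}$, valid as an identity of formal power series (equivalently, for $|q|<1$). Substituting this in gives
\[
\frac{q\frac{d}{dq}P(q)}{P(q)} = -\sum_{n=1}^\infty \sum_{k=1}^\infty a(n)\, n\, q^{nk}.
\]
The final step is to collect the coefficient of a fixed power $q^N$. The monomial $q^{nk}$ contributes to $q^N$ precisely when $nk = N$, i.e.\ when $n$ runs over the divisors $d$ of $N$ (with $k = N/d$ determined). Regrouping the double sum along these lines yields
\[
\frac{q\frac{d}{dq}P(q)}{P(q)} = -\sum_{N=1}^\infty \left(\sum_{d\mid N} a(d)\, d\right) q^N,
\]
which is the claimed identity after renaming $N$ to $n$.

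The only genuine subtlety, and the step I would be most careful to justify, is the interchange and regrouping of the doubly-infinite sum over $n$ and $k$ in the passage to the divisor sum. In the ring of formal power series in $q$ this is legitimate because, for each fixed $N$, only finitely many pairs $(n,k)$ satisfy $nk=N$, so each coefficient is a finite sum and no convergence issue arises; alternatively one may invoke absolute convergence on $|q|<1$ to rearrange freely. The same remark underlies the term-by-term differentiation of $\log P(q)$ in the first step. I would state this formal-series justification briefly rather than belabor it, since everything else is a routine computation.
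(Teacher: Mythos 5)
Your proposal is correct and follows essentially the same route as the paper: take the logarithmic derivative, expand to get the double sum $-\sum_{n}\sum_{k} a(n)\,n\,q^{nk}$, and regroup by divisors. The only cosmetic difference is that you differentiate $\log(1-q^n)$ first and then expand the geometric series, whereas the paper expands $\log(1-q^n)$ as $-\sum_m q^{mn}/m$ first and then differentiates term by term; both yield the identical intermediate expression.
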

\begin{proof}
Since $\log(1-x) = -\sum_{m=1}^\infty \frac{x^m}{m},$ we have that
\begin{align*}
\frac{q\frac{d}{dq}\left(\prod_{n=1}^\infty(1-q^n)^{a(n)}\right)}{\prod_{n=1}^\infty(1-q^n)^{a(n)}} &= q\frac{d}{dq}\left(\log\left(\prod_{n=1}^\infty(1-q^n)^{a(n)}\right)\right) = q\frac{d}{dq}\left(\sum_{n=1}^\infty a(n)\log\left(1-q^n\right)\right)\\
&= -q\frac{d}{dq}\left(\sum_{n=1}^\infty a(n)\sum_{m=1}^\infty \frac{q^{mn}}{m}\right) = -\left(\sum_{n=1}^\infty a(n)\sum_{m=1}^\infty nq^{mn}\right)\\
&= -\sum_{n=1}^\infty\sum_{d|n}a(d)dq^n
\end{align*}
as desired.
\end{proof}

\begin{proof}[Proof of Theorem \ref{thm1}]
First note that for all $n\geq 1$ we have \[\widehat{Q}(n) =\sum_{d|n}Q(d),\] so $Q(n) = \sum_{d|n}\mu(d)\widehat{Q}(n/d)$ by M\"obius inversion. By Lemma \ref{lem}, we have that \[\frac{q\frac{d}{dq}\Psi(Q;q)}{\Psi(Q;q)} = - \sum_{n=1}^\infty \sum_{d|n}Q(d)q^n = - \sum_{n=1}^\infty \widehat{Q}(n)q^n\] as desired.
\end{proof}

\section{Proof of Theorems \ref{thm2} and \ref{thm3}}

Suppose that for each positive integer $a$, we have two arithmetic functions $f(a;n)$ and $\widehat{f}(a;n)$ such that \[\widehat{f}(a;n) = \sum_{j=1}^\infty f(aj;n),\] where the above sum converges absolutely. We will define their generating functions as follows.
\begin{align*}
F(a;q) &:= \sum_{n=1}^\infty f(a;n)q^n\\
\widehat{F}(a;q) &:= \sum_{n=1}^\infty\widehat{f}(a;n)q^n.
\end{align*}
We then have the following result.
\begin{lemma} \label{lemma2} We have that
\[F(a;q)=\sum_{n=1}^\infty\mu (n)\widehat{F}(an;q)\]
and
\[f(a;n)= \sum_{j=1}^\infty \mu(j)\widehat{f}(aj;n).\]
\end{lemma}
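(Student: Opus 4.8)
The plan is to prove Lemma \ref{lemma2} directly from the defining relation $\widehat{f}(a;n) = \sum_{j=1}^\infty f(aj;n)$ by a Möbius inversion argument, where the inversion is carried out not over divisors of a single integer but over the multiplicative structure of the index $a$. The second identity (on the functions $f$ and $\widehat{f}$) and the first identity (on the generating functions) are formally the same statement: since the generating functions are obtained by summing against $q^n$ coefficientwise, it suffices to prove the identity at the level of the arithmetic functions and then multiply by $q^n$ and sum over $n$. So I would reduce everything to verifying, for each fixed $n$, that $f(a;n) = \sum_{j=1}^\infty \mu(j)\widehat{f}(aj;n)$.

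To establish this, I would substitute the defining relation into the right-hand side and interchange the order of summation. Writing out
\[
\sum_{j=1}^\infty \mu(j)\widehat{f}(aj;n) = \sum_{j=1}^\infty \mu(j)\sum_{k=1}^\infty f(ajk;n),
\]
I would collect terms according to the value of the product $m := jk$. Every term $f(am;n)$ on the right is weighted by $\sum_{j \mid m}\mu(j)$, since a contribution to $f(am;n)$ arises exactly when $j$ divides $m$ and $k=m/j$. By the fundamental property of the Möbius function, $\sum_{j\mid m}\mu(j)$ equals $1$ when $m=1$ and $0$ when $m>1$, so the entire double sum collapses to the single term $f(a\cdot 1;n) = f(a;n)$, which is exactly what we want.

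The main obstacle is not the algebra, which is the standard Möbius inversion identity, but justifying the interchange of the two infinite summations over $j$ and $k$. This is where the hypothesis that $\sum_{j=1}^\infty f(aj;n)$ converges absolutely becomes essential: absolute convergence of the inner sum for each relevant argument, together with the fact that $|\mu(j)|\le 1$, lets me apply Fubini/Tonelli for series to rearrange the double sum and regroup by $m=jk$ without altering its value. I would state this justification explicitly, note that the rearrangement is legitimate precisely because of the absolute convergence assumption built into the setup, and then read off the collapsed sum. Finally, to obtain Theorems \ref{thm2} and \ref{thm3} I would simply verify that the relevant pairs of functions satisfy the hypothesis $\widehat{f}(a;n)=\sum_{j=1}^\infty f(aj;n)$ — for Theorem \ref{thm2} this is the definition $\widehat{p}_a(n)=\sum_{j=1}^\infty p_{aj}(n)$, and for Theorem \ref{thm3} it is the relation $\widehat{b_a}(n)=\sum_{j=1}^\infty b_{aj}(n)$ established from the Durfee-rectangle interpretation — and then invoke Lemma \ref{lemma2}.
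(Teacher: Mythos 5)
Your proposal is correct and uses essentially the same idea as the paper: both arguments rest on the identity $\sum_{d\mid m}\mu(d)=1$ if $m=1$ and $0$ otherwise, together with an interchange of a double sum justified by the absolute convergence hypothesis. The only (cosmetic) differences are that you collapse the right-hand side down to $f(a;n)$ while the paper expands $F(a;q)$ outward into $\sum_n\mu(n)\widehat{F}(an;q)$, and that you are more explicit than the paper about justifying the rearrangement.
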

 
\begin{proof}
Recall that \[\sum_{d|n}\mu(n) = \begin{cases} 1 & \text{if } n=1\\ 0 & \text{otherwise}\end{cases}.\] It follows that
\begin{align*}
F\left(a;q\right) &=\sum_{n=1}^\infty\left(\sum_{k\ge 1} f(an;k)q^k\right) \sum_{d|n} \mu(d)\\
&=\sum_{n=1}^\infty\mu(n) \sum_{k\ge 1} \left( \sum_{j=1}^\infty  f(anj;k)\right)q^k\\
&=\sum_{n=1}^\infty\mu(n) \sum_{k\ge 1}\widehat{f}(an;n)q^k\\
&=\sum_{n=1}^\infty\mu(n) \widehat{F}(an;q).
\end{align*}
Then by comparing coefficients, one finds that $f(a;n) = \sum_{j=1}^\infty \widehat{f}(aj;n),$ as desired.
\end{proof}

This lemma can be used to prove both Theorem \ref{thm2} and Theorem \ref{thm3}. We note that Lemma \ref{lemma2} can be applied in extremely general settings, and one has great freedom in creatively choosing the constant $a$ to be varied. For instance, taking $a=1$ gives rise to any number of identities, as $1$ can be inserted as a factor practically anywhere in a given expression.   

\begin{proof}[Proof of Theorem \ref{thm2}] The theorem follows by a direct application of Lemma \ref{lemma2}.
\end{proof}

\begin{proof}[Proof of Theorem \ref{thm3}] First note that \[\widehat{b}_a(N) = \sum_{j=1}^\infty b_{aj}(N),\] since
\begin{align*}
\widehat{B}_a(q) &= \sum^{\infty }_{n=1}\frac{q^{n^2+an}}{(q)^2_n\left(1-q^{an}\right)} = \sum_{n=1}^\infty \frac{q^{n^2+an}}{(q)^2_n} \sum_{j=0}^\infty q^{ajn}\\
&= \sum_{j=1}^\infty \sum_{n=1}^\infty \frac{q^{n^2+ajn}}{(q)^2_n} = \sum_{j=1}^\infty B_{aj}(q).
\end{align*}
The rest follows by applying Lemma \ref{lemma2}.
\end{proof}

\section*{Acknowledgements}

The authors thank Ken Ono, Robert Lemke Oliver and Andrew Granville for their useful comments and insights.

\bibliographystyle{amsplain}

\end{document}